\newtheorem{theorem}{Theorem}
\theoremstyle{plain}
\newtheorem{corollary}{Corollary}
\newtheorem{remark}{Remark}
\numberwithin{equation}{section}
\theoremstyle{theorem}
\begin{document}
\title[ Bernoulli Polynomials with a $q$ Parameter in Terms of $r$-Whitney Numbers]{
An Explicit Formula for Bernoulli Polynomials with a $q$ Parameter in Terms of $r$-Whitney Numbers}
\author{F. A. Shiha}
\address{Department of Mathematics Faculty of Science \\
Mansoura University Mansoura, 35516, EGYPT.}
\email{fshiha@yahoo.com, fshiha@mans.edu.eg}

\begin{abstract}
We define the Bernoulli polynomials with a $q$ parameter in terms of $r$-Whitney numbers of the second kind. Some algebraic properties and combinatorial identities of these polynomials are given. Also, we obtain several relations between the Cauchy and Bernoulli polynomials with a $q$ parameter in terms of $r$-Whitney numbers of both kinds. 
\end{abstract}

\maketitle

\bigskip AMS (2010) Subject Classification: 05A15, 05A19, 11B68, 11B73.

Key Words. Bernoulli numbers and polynomials, $r$-Whitney numbers, Stirling numbers.

\section{Introduction}
\bigskip The Bernoulli polynomials $B_n(z)$, are defined by the generating function \cite{comtet74}
\begin{equation}
\sum_{n=0}^{\infty} B_n(z)\,\frac{t^n}{n!}=\frac{t\, e^{zt}}{e^t-1}.
\end{equation}
When $z=0$, $B_n=B_n(0)$ are called the Bernoulli numbers. Graham et al. \cite[P. 560]{grah94} represented $B_n$ also as
\begin{equation}\label{E:bs2}
B_n=\sum_{k=0}^n(-1)^k\, \frac{k!}{k+1}\, S(n,k),
\end{equation}
where $S(n,k)$ are the Stirling numbers of the second kind \cite{comtet74}.

For all integers $n, r \geq 0$, $B_n(r)$ can be written explicitly  in terms of $r$-Stirling numbers
of the second kind $S_r(n,k)$  \cite{mezo2016}
\begin{equation}\label{E:mb}
B_n(r)=\sum_{k=0}^n\frac{(-1)^k\,k!}{k+1}\:S_r(n+r,k+r).
\end{equation}
 Cenkci et al. \cite{tkom2015} introduced the poly-Bernoulli polynomials with a $q$ parameter $ B_{n,q}^{(k)}(z)$ as a generalization of the poly-Bernoulli polynomials $ B_n^{(k)}(z)$. Let $q$ be a real number
with $q \neq 0$,  \cite{tkom2015} defined the poly-Bernoulli polynomials with a q parameter by
\begin{equation}\label{E:pbq}
\sum_{n=0}^{\infty} B_{n,q}^{(k)}(z)\,\frac{t^n}{n!}=\frac{q\,e^{-zt}}{1-e^{-qt}}\,\sum_{n=1}^{\infty}\left(\frac{(1-e^{-qt})}{q}\right)^n\,\frac{1}{n^k}.
\end{equation}
If $z = 0$, then  $B_{n,q}^{(k)}(0)=B_{n,q}^{(k)}$ are the poly-Bernoulli numbers with a $q$ parameter.
If $q=1$, then $ B_{n,1}^{(k)}(z)=B_n^{(k)}(z)$ are the poly-Bernoulli polynomials \cite{cop2010}.
 If $q=1$, $z=0$, then $ B_{n,1}^{(k)}(0)=B_n^{(k)}$ are the poly-Bernoulli numbers, defined in \cite{kan97}.

Setting $k=1$ in \eqref{E:pbq}, and let $ B_n^q(z)=B_{n,q}^{(1)}(z)$ denote the Bernoulli polynomials with $q$ parameter, we obtain
\begin{equation}
\sum_{n=0}^{\infty} B_{n}^{q}(z)\,\frac{t^n}{n!}=\frac{q\,e^{-zt}}{1-e^{-qt}}\,\sum_{n=1}^{\infty}\left(\frac{(1-e^{-qt})}{q}\right)^n\,\frac{1}{n}.
\end{equation}
If $z=0$, then $B_{n}^{q}(0)=B_{n}^{q}$ are the Bernoulli numbers with $q$ parameter. Note that $e^{zt}$ is replaced by $e^{-zt}$ (see \cite{tkom2015, bay2011, cop2010}).

In this paper, we are interested in Bernoulli polynomials with a $q$ parameter. New representation for these polynomials in terms of $r$-Whitney numbers of the second kind and many properties will be given.

For non-negative integers $n$ and $k$ with $0\leq k\leq n$, let $w(n,k)=w_{q,r}(n,k)$ and $W(n,k)=W_{q,r}(n,k)$ denote the $r$-Whitney numbers of the first and second kind, respectively, defined in \cite{mezo2010} by
\begin{equation}\label{E:wi1}
q^n(x)_n=\sum_{k=0}^n\:w(n,k)\:(qx+r)^k,
\end{equation}
\begin{equation}\label{E:wi2}
(qx+r)^n=\sum_{k=0}^{n}q^k\:W(n,k)\:(x)_k.
\end{equation}
The exponential generating function of $W(n,k)$ is given by \cite{mezo2010}
\begin{equation}\label{E:ewi2}
\sum_{n\geq k}W(n,k)\frac {t^n}{n!}=\frac{ e^{rt}}{ k!}\left(\frac{e^{qt}-1}{q}\right)^k.
\end{equation}
The parameters $r \geq 0$ and $q > 0$ are usually taken to be integers, but both may also be considered as indeterminates \cite{shat2017}.

Komatsu \cite{kom2013} defined the Cauchy polynomials with a $q$ parameter of the first and second kind, denoted by $c_n^q(z)$, $\hat{c}_n^q(z)$, respectively
\begin{equation}\label{E:komc1}
c_n^q(z)=\int_{0}^{1}(x-z|q)_n \, dx \qquad q\neq 0,
\end{equation}
\begin{equation}\label{E:komc2}
\hat{c}_n^q(z)=\int_{0}^{1}(-x+z|q)_n \, dx \qquad q\neq 0.
\end{equation}
Recently, Shiha \cite{shiha18} gave explicit formulae for  $c_n^q(r)$ and $\hat{c}_n^q(r)$ in terms of $r$-Whitney numbers of the first kind
\begin{equation}\label{D:cau1}
c_n^q(r)=\sum_{k=0}^{n}w(n,k)\:\frac{1}{k+1} \qquad q\neq 0,
\end{equation}
\begin{equation}\label{D:cau2}
\hat{c}_n^q(-r)=\sum_{k=0}^{n}(-1)^k\,w(n,k)\:\frac{1}{k+1} \qquad q\neq 0.
\end{equation}
The relations between $c_n^{q}(r)$, $\hat{c}_n^{q}(r)$ and $W(n,k)$ are (see \cite{shiha18})

\begin{equation}\label{E:Wc}
\sum_{k=0}^n W(n,k)\:c_k^{q}(r)=\frac{1}{n+1}
\end{equation}
\begin{equation}\label{E:Wc2}
\sum_{k=0}^n W(n,k)\:\hat{c}_k^{q}(-r)=\frac{(-1)^n}{n+1}
\end{equation}

\section{\protect\bigskip The basic results}

\begin{theorem}
 We define the Bernoulli polynomials with a $q$ parameter in terms of $W(n,k)$ by
 \begin{equation}\label{E:B}
 B_n^q(r)=\sum_{k=0}^{n}(-1)^k\:\frac{k!}{k+1}\:W(n,k).
 \end{equation}
 \end{theorem}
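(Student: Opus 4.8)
The plan is to prove \eqref{E:B} by showing that its two sides have the same exponential generating function, after which comparison of the coefficients of $t^n/n!$ finishes the argument. On one side sits the defining generating function of $B_n^q(z)$ obtained from \eqref{E:pbq} by setting $k=1$ and $z=r$; on the other side sits the generating function of the proposed closed form $\sum_{k=0}^n(-1)^k\frac{k!}{k+1}W(n,k)$, which I will assemble directly from the Whitney generating function \eqref{E:ewi2}.

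First I would simplify the left-hand generating function. Writing $u=\frac{1-e^{-qt}}{q}$, the prefactor collapses, $\frac{q\,e^{-rt}}{1-e^{-qt}}=\frac{e^{-rt}}{u}$, and absorbing this single power of $u$ into the series $\sum_{n\geq1}u^n/n$ shifts the index and yields the clean form $\sum_{n=0}^{\infty}B_n^q(r)\frac{t^n}{n!}=e^{-rt}\sum_{m=0}^{\infty}\frac{1}{m+1}\left(\frac{1-e^{-qt}}{q}\right)^m$. Note that this step does not require summing the logarithmic series; only the cancellation of the prefactor against one factor of $u$ is needed.

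Next I would compute the generating function of the right-hand side. Multiplying \eqref{E:B} by $t^n/n!$, summing over $n$, and interchanging the (finite inner) sum with the outer one, I insert \eqref{E:ewi2}: the factor $k!$ cancels, and the identity $(-1)^k\left(\frac{e^{qt}-1}{q}\right)^k=\left(\frac{1-e^{qt}}{q}\right)^k$ produces $e^{rt}\sum_{k=0}^{\infty}\frac{1}{k+1}\left(\frac{1-e^{qt}}{q}\right)^k$. Comparing this with the reduced form from the previous paragraph, the two expressions are exchanged by the reflection $t\mapsto -t$, and reading off the coefficient of $t^n/n!$ on each side then yields the claimed identity, provided the signs are tracked correctly.

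The main obstacle is precisely this bookkeeping of signs through the reflection. Two sources must be reconciled: the factor $(-1)^k$ produced when $(1-e^{\pm qt})^k$ is rewritten as $(\mp1)^k(e^{\pm qt}-1)^k$, and the parity factor arising from replacing $t$ by $-t$ in each monomial $t^n$. The care required is to match these against the explicit $(-1)^k$ already present in \eqref{E:B} so that the coefficients of $t^n/n!$ on both sides agree; everything else is the routine interchange of summations, which is legitimate since for each fixed $n$ only the finitely many $k\leq n$ contribute. An alternative and slightly longer route would avoid generating functions and instead exploit the inverse relationship between the $r$-Whitney numbers of the two kinds together with the Cauchy evaluation \eqref{E:Wc}, but the generating-function comparison above is the most direct.
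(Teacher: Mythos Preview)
Your approach is essentially the paper's: compute the exponential generating function of the right-hand side via \eqref{E:ewi2}, simplify to $\dfrac{q\,e^{rt}}{1-e^{qt}}\sum_{k\ge1}\frac{1}{k}\bigl(\frac{1-e^{qt}}{q}\bigr)^{k}$, and compare with the defining series for $B_n^q(r)$. The paper does exactly this computation and then matches coefficients.

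One point deserves sharpening. You correctly observe that the series you obtain and the series coming from \eqref{E:pbq} with $k=1$, $z=r$ are related by $t\mapsto -t$; but no amount of bookkeeping makes that reflection disappear. If one insists on the convention of \eqref{E:pbq} (with $e^{-rt}$ and $e^{-qt}$), the coefficients differ by a genuine factor $(-1)^n$, and the formula would read $B_n^q(r)=\sum_{k}(-1)^{n-k}\frac{k!}{k+1}W(n,k)$ instead. The paper sidesteps this not by a cancellation but by convention: note that Theorem~1 is phrased as ``We \emph{define}\ldots'', and the proof ends with the parenthetical remark that $e^{rt}$ here corresponds to $e^{-rt}$ in \cite{tkom2015}. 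In other words, the paper's $B_n^q(r)$ is tied to the generating function with $e^{rt}$ and $e^{qt}$ (as made explicit in \eqref{E:ebq}), and with that convention the signs match on the nose. So your plan is correct; just be aware that the ``sign obstacle'' is resolved by the paper's choice of convention rather than by an internal cancellation, and you should state clearly which generating function you are taking as the definition before comparing coefficients.
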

 \begin{proof}
 Using \eqref{E:ewi2}, we get
 \begin{equation*}
 \begin{split}
 \sum_{n=0}^{\infty}\,\sum_{k=0}^n(-1)^k\:\frac{k!}{k+1}W(n,k)\:\frac{t^n}{n!}&=\sum_{k=0}^{\infty}(-1)^k\:\frac{k!}{k+1}\sum_{n=k}^{\infty}W(n,k)\:\frac{t^n}{n!}\\
 &=\sum_{k=0}^{\infty}(-1)^k\:\frac{k!}{k+1}\:\frac{e^{rt}}{k!}\left(\frac{e^{qt}-1}{q}\right)^k\\
 &=\frac{q\;e^{rt}}{1-e^{qt}}\sum_{k=0}^{\infty}\left(\frac{1-e^{qt}}{q}\right)^{k+1}\:\frac{1}{k+1}\\
 &=\frac{q\:e^{rt}}{1-e^{qt}}\sum_{k=1}^{\infty}\left(\frac{1-e^{qt}}{q}\right)^k\:\frac{1}{k}\\
 &=\sum_{n=0}^{\infty}B_n^q(r)\:\frac{t^n}{n!}.
 \end{split}
 \end{equation*}
 Comparing the coefficients of both sides, we get \eqref{E:B} (notice that $e^{rt}$ is replaced by $e^{-rt}$ in \cite{tkom2015}).
 \end{proof}

 On the other hand,
 \begin{equation}\label{E:ebq}
 \begin{split}
 \sum_{n=0}^{\infty}B_n^q(r)\:\frac{t^n}{n!}&=\frac{q\:e^{rt}}{1-e^{qt}}\sum_{k=1}^{\infty}\left(\frac{1-e^{qt}}{q}\right)^k\:\frac{1}{k}\\
 &=\frac{q\:e^{rt}}{1-e^{qt}}\left(-\ln(1-(\frac{1-e^{qt}}{q}))\right)\\
 &=\frac{q\:e^{rt}}{e^{qt}-1}\left(\ln\frac{q-1+e^{qt}}{q}\right).
 \end{split}
 \end{equation}

 The first few polynomials are

 $B_0^q(r)=1$,

 $B_1^q(r)=r-\frac{1}{2}$,

 $B_2^q(r)=r^2-r-\frac{1}{2}q+\frac{2}{3}$,

 $B_3^q(r)=r^3-\frac{3}{2}r^2+(2-\frac{3}{2}q)r-\frac{1}{2}q^2+2q-\frac{3}{2}$,

 $B_4^q(r)=r^4-2r^3+(4-3q)r^2-2(q^2-4q+3)r-\frac{1}{2}q^3+\frac{14}{3}q^2-9q+\frac{24}{5}$.

 \begin{remark}
 If $r=0$, $B_n^q(0)=B_n^q$ are the Bernoulli numbers with  $q$ parameter, using \eqref{E:ebq}, we get

\begin{equation}
\sum_{n=0}^{\infty}B_n^q\:\frac{t^n}{n!}=\frac{q}{e^{qt}-1}\left(\ln\frac{q-1+e^{qt}}{q}\right),
\end{equation}
$ W_{q,0}(n,k)=q^{n-k}\,S(n,k)$, then $B_n^q$ can be expressed in terms of $S(n,k)$ as
\begin{equation}
 B_n^q=\sum_{k=0}^n (-1)^k\,\frac{k!}{k+1}\;q^{n-k}\,S(n,k),
 \end{equation}
If $q=1$, $B_n^1(r)=B_n(r)$ and $W_{1,r}(n,k)$ are reduced to $S_r(n+r,k+r)$ then, we get the explicit formula \eqref{E:mb}, and
\begin{equation}
\sum_{n=0}^{\infty} B_n(r)\,\frac{t^n}{n!}=\frac{t\, e^{rt}}{e^t-1}.
\end{equation}
If $q=1$ and $r=0$, then $B_n^1(0)=B_n$ and $W_{1,0}(n,k)$ are reduced to $S(n,k)$ then, we get the explicit formula \eqref{E:bs2}.
\end{remark}

It is known that the $r$-Whitney numbers satisfy the following orthogonality relation \cite{mezo2010}:
\begin{equation}
\sum_{l=k}^n\, w(n,l)\,W(l,k)=\sum_{l=k}^n\,W(n,l)\,w(l,k)=\delta_{kn},
\end{equation}
where $\delta_{kn}$ is the Kronecker delta.

\begin{corollary}
The relation between $w(n,k)$ and $B_n^q(r)$ is given by
\begin{equation}\label{E:wB}
\sum_{j=0}^n \, (-1)^n\, w(n,j)\,B_j^q(r)=\frac{n!}{n+1}.
\end{equation}
\end{corollary}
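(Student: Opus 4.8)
The plan is to prove \eqref{E:wB} by substituting the defining formula \eqref{E:B} into the left-hand side, interchanging the order of summation, and then collapsing the resulting double sum via the orthogonality relation for the $r$-Whitney numbers stated just before the corollary. This is the natural strategy because \eqref{E:wB} pairs $w(n,j)$ against $B_j^q(r)$, and $B_j^q(r)$ is itself a $W(j,k)$-combination, so the composition $w \cdot W$ should trigger orthogonality.

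First I would pull the factor $(-1)^n$ out and expand $B_j^q(r)$ using \eqref{E:B}, obtaining
\[
\sum_{j=0}^n (-1)^n\, w(n,j)\, B_j^q(r)
= (-1)^n \sum_{j=0}^n w(n,j) \sum_{k=0}^{j} (-1)^k\,\frac{k!}{k+1}\,W(j,k).
\]
The index constraints are $0 \le k \le j \le n$, so interchanging the two sums lets $k$ range from $0$ to $n$ while $j$ ranges from $k$ to $n$:
\[
= (-1)^n \sum_{k=0}^{n} (-1)^k\,\frac{k!}{k+1} \sum_{j=k}^{n} w(n,j)\,W(j,k).
\]

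The key step is to recognize the inner sum $\sum_{j=k}^{n} w(n,j)\,W(j,k)$ as exactly the left member of the orthogonality relation $\sum_{l=k}^n w(n,l)\,W(l,k)=\delta_{kn}$, so it equals the Kronecker delta $\delta_{kn}$. This kills every term except $k=n$, leaving $(-1)^n (-1)^n \frac{n!}{n+1}$, and since $(-1)^n(-1)^n = 1$ this is precisely $\frac{n!}{n+1}$, as claimed. I do not expect a genuine obstacle here; the only thing to handle with care is the bookkeeping of the summation interchange and confirming that the summation ranges align exactly with the form of the orthogonality relation, after which the identity follows immediately.
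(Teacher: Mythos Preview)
Your proposal is correct and follows essentially the same route as the paper: expand $B_j^q(r)$ via \eqref{E:B}, swap the order of summation, and apply the orthogonality relation $\sum_{j=k}^n w(n,j)\,W(j,k)=\delta_{kn}$ to isolate the $k=n$ term. The paper carries out exactly these steps, writing the sign as $(-1)^{n+k}$ throughout rather than factoring out $(-1)^n$, but the argument is identical.
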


\begin{proof}
\begin{equation*}
\begin{split}
\sum_{j=0}^n\, (-1)^n\, w(n,j)\,B_j^q(r)&=\sum_{j=0}^n \,\sum_{k=0}^j\, (-1)^{n+k}\frac{k!}{k+1}\,w(n,j)\,W(j,k)\\
&=\sum_{k=0}^n (-1)^{n+k}\frac{k!}{k+1}\sum_{j=k}^n w(n,j)W(j,k)\\
&=\sum_{k=0}^n (-1)^{n+k}\frac{k!}{k+1}\delta_{kn}=\frac{n!}{n+1}.
\end{split}
\end{equation*}
\end{proof}

 The numbers $ W(n,k)$ are determined by (see \cite{mezo2010}).
 \begin{equation}
 W(n,k)=\frac{1}{q^k\:k!}\sum_{j=0}^{k}(-1)^{k-j}\binom{k}{j}(r+jq)^n,
 \end{equation}
 then, we obtain

 \begin{theorem}
 For $n\geq 0$ , we have
 \begin{equation}
 B_n^q(r)=\sum_{k=0}^n\frac{1}{q^k\:(k+1)}\sum_{j=0}^{k}(-1)^{j}\binom{k}{j}(r+jq)^n
 \end{equation}
 \end{theorem}

 \begin{theorem}
 For $n\geq 0$, we have
 \begin{equation}\label{E:cb}
 c_n^q(r)=\sum_{k=0}^n \sum_{j=0}^k \frac{(-1)^k}{k!}\:w(n,k)\,w(k,j)\:B_j^q(r),
 \end{equation}
  \begin{equation}\label{E:cb2}
 \hat{c}_n^q(-r)=\sum_{k=0}^n \sum_{j=0}^k \frac{1}{k!}\:w(n,k)\,w(k,j)\:B_j^q(r),
 \end{equation}
 \begin{equation}\label{E:bc}
 B_n^q(r)=\sum_{k=0}^{n}\sum_{j=0}^k\:(-1)^k\:k! \:W(n,k)\,W(k,j)\:c_j^q(r).
 \end{equation}
 \begin{equation}\label{E:bc2}
 B_n^q(r)=\sum_{k=0}^{n}\sum_{j=0}^k\:k! \:W(n,k)\,W(k,j)\:\hat{c}_j^q(-r).
 \end{equation}
 \end{theorem}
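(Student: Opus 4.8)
The plan is to prove all four identities by a single mechanism: substitute the relevant explicit expansion into the double sum, interchange the order of summation, and collapse one of the two sums using either the orthogonality relation $\sum_{l=k}^n w(n,l)\,W(l,k)=\delta_{kn}$ or the auxiliary relations \eqref{E:Wc} and \eqref{E:Wc2}. In each case the double sum telescopes to a single sum that is recognizable as one of the known expansions \eqref{D:cau1}, \eqref{D:cau2}, or \eqref{E:B}.

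First I would establish the key auxiliary identity
\[
\sum_{j=0}^k w(k,j)\,B_j^q(r)=(-1)^k\,\frac{k!}{k+1}.
\]
To see this, insert the definition \eqref{E:B} of $B_j^q(r)$, swap the order of the $j$- and inner summations, and apply orthogonality to the inner sum $\sum_{j=i}^k w(k,j)\,W(j,i)=\delta_{ik}$, so that only the term $i=k$ survives. With this in hand, \eqref{E:cb} and \eqref{E:cb2} follow at once. Pulling the factor $\tfrac{(-1)^k}{k!}\,w(n,k)$ outside the $j$-sum on the right-hand side of \eqref{E:cb} and substituting the auxiliary identity reduces the expression to $\sum_{k=0}^n \tfrac{1}{k+1}\,w(n,k)$, which is $c_n^q(r)$ by \eqref{D:cau1}; the analogous computation for \eqref{E:cb2} produces $\sum_{k=0}^n (-1)^k\,\tfrac{1}{k+1}\,w(n,k)=\hat{c}_n^q(-r)$ by \eqref{D:cau2}.

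For the reverse direction I would proceed analogously but use the second-kind relations. On the right-hand side of \eqref{E:bc}, factor out $(-1)^k\,k!\,W(n,k)$ and recognize the inner sum $\sum_{j=0}^k W(k,j)\,c_j^q(r)$ as $\tfrac{1}{k+1}$ by \eqref{E:Wc}; what remains is $\sum_{k=0}^n (-1)^k\,\tfrac{k!}{k+1}\,W(n,k)=B_n^q(r)$ by \eqref{E:B}. Identity \eqref{E:bc2} is identical except that the inner sum $\sum_{j=0}^k W(k,j)\,\hat{c}_j^q(-r)$ equals $\tfrac{(-1)^k}{k+1}$ by \eqref{E:Wc2}, so the sign $(-1)^k$ now arises from the inner sum rather than from an explicit prefactor, and the remaining sum is again $B_n^q(r)$.

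The computations are routine; the only point requiring care is the interchange of summation order together with the bookkeeping of the index ranges, so that the orthogonality relation is applied with the correct limits (the inner sum must run from the lower index up to the fixed upper index $k$). Tracking the parity factors $(-1)^k$ through each substitution is the other place where a sign slip could occur, but beyond this no genuine difficulty arises.
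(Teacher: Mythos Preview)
Your proposal is correct and follows essentially the same approach as the paper: the paper first records your auxiliary identity as a separate corollary (its \eqref{E:wB}, proved exactly via orthogonality as you do), then uses it together with \eqref{D:cau1} to obtain \eqref{E:cb}, and uses \eqref{E:Wc} together with \eqref{E:B} to obtain \eqref{E:bc}, leaving the other two identities to the reader. Your write-up merely inlines the proof of the auxiliary identity and spells out the remaining two cases.
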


\begin{proof}
We shall prove the first and the third identities. Other identities are proven similarly. By \eqref{E:wB}, and using \eqref{D:cau1}, we have
 \begin{equation*}
 \begin{split}
 \sum_{k=0}^n\, \sum_{j=0}^k \frac{(-1)^k}{k!}\:w(n,k)\,w(k,j)\:B_j^q(r)&=\sum_{k=0}^n \frac{1}{k!}\, w(n,k)\sum_{j=0}^k (-1)^k\,w(k,j)\, B_j^q(r)\\
&=\sum_{k=0}^n \frac{1}{k+1}\,w(n,k)=c_n^q(r).
 \end{split}
 \end{equation*}
By \eqref{E:Wc}, and using \eqref{E:B}, we obtain
\begin{equation*}
\begin{split}
\sum_{k=0}^{n}\sum_{j=0}^k\:(-1)^k\:k! \,W(n,k)\,W(k,j)\,c_j^q(r)&=\sum_{k=0}^{n}(-1)^k\,k!\:W(n,k)\sum_{j=0}^k\,W(k,j)\,c_j^q(r)\\
&=\sum_{k=0}^{n}(-1)^k\,k!\:W(n,k)\,\frac{1}{k+1}=B_n^q(r).
\end{split}
\end{equation*}
\end{proof}

It is known that (see \cite{cheon12})
\begin{equation}
W_{q,r+s}(n,k)=\sum_{j=k}^n \binom{n}{j}\,r^{n-j}\,W_{q,s}(j,k).
\end{equation}
Therefore, we get the following properties, which are similar to those for classical Bernoulli polynomials and numbers.

\begin{theorem}
For $n \geq 0$, we have
\begin{equation}\label{E:rrb}
B_n^q(r+s)=\sum_{j=0}^n\binom{n}{j}\,r^{n-j}B_j^q(s).
\end{equation}
\begin{equation}\label{E:rrbn}
 B_n^q(r)=\sum_{j=0}^n\binom{n}{j}\,r^{n-j}B_j^q.
\end{equation}
\end{theorem}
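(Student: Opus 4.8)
The plan is to derive \eqref{E:rrb} directly from the defining formula \eqref{E:B} together with the addition formula for the $r$-Whitney numbers displayed just before the theorem, and then to obtain \eqref{E:rrbn} as the special case $s=0$. The key observation is that $B_n^q(r)$, $B_n^q(s)$, and $B_n^q(r+s)$ are built from $W_{q,r}$, $W_{q,s}$, and $W_{q,r+s}$ respectively, so the translation identity $W_{q,r+s}(n,k)=\sum_{j=k}^n\binom{n}{j}r^{n-j}W_{q,s}(j,k)$ is precisely the tool needed to transfer the shift from the Whitney level to the Bernoulli level.

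Concretely, I would first write $B_n^q(r+s)=\sum_{k=0}^n(-1)^k\frac{k!}{k+1}\,W_{q,r+s}(n,k)$ via \eqref{E:B}, then substitute the addition formula to get a double sum $\sum_{k=0}^n\sum_{j=k}^n(-1)^k\frac{k!}{k+1}\binom{n}{j}r^{n-j}W_{q,s}(j,k)$. The next step is to interchange the order of summation: the region $\{0\le k\le n,\ k\le j\le n\}$ equals $\{0\le j\le n,\ 0\le k\le j\}$, so the sum becomes $\sum_{j=0}^n\binom{n}{j}r^{n-j}\sum_{k=0}^j(-1)^k\frac{k!}{k+1}W_{q,s}(j,k)$. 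The inner sum is exactly $B_j^q(s)$ by \eqref{E:B}, which yields \eqref{E:rrb}.

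For \eqref{E:rrbn} I would simply set $s=0$ in \eqref{E:rrb} and invoke the convention $B_j^q(0)=B_j^q$ recorded in the Remark, so that the right-hand side collapses to $\sum_{j=0}^n\binom{n}{j}r^{n-j}B_j^q$.

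The only step requiring any care is the interchange of the two finite summations and the accompanying reindexing of the limits; once the index region is described symmetrically this is immediate, so I do not anticipate a genuine obstacle. I would only make sure to state explicitly that the factor $(-1)^k\frac{k!}{k+1}$ is independent of $j$ and hence passes freely through the $j$-summation, which is what allows the inner sum to be recognized cleanly as $B_j^q(s)$.
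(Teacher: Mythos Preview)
Your proposal is correct and follows essentially the same approach as the paper: apply \eqref{E:B} with parameter $r+s$, substitute the addition formula for $W_{q,r+s}(n,k)$, interchange the two finite sums, and recognize the inner sum as $B_j^q(s)$; then specialize $s=0$ for \eqref{E:rrbn}. The only difference is that you spell out the interchange of summation region explicitly, which the paper leaves implicit.
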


\begin{proof}
 \begin{equation*}
 \begin{split}
 B_n^q(r+s)&=\sum_{k=0}^n(-1)^k\:\frac{k!}{k+1}\, W_{q,r+s}(n,k)\\
 &=\sum_{k=0}^n \,\sum_{j=k}^n (-1)^k\:\frac{k!}{k+1}\binom{n}{j}\,r^{n-j}\,W_{q,s}(j,k)\\
 &=\sum_{j=0}^n\binom{n}{j}\,r^{n-j}\:\sum_{k=0}^{j}(-1)^k \,\frac{k!}{k+1}\,W_{q,s}(j,k)=\sum_{j=0}^n\binom{n}{j}\,r^{n-j}\,B_j^q(s).
 \end{split}
 \end{equation*}
 Setting $s=0$ in \eqref{E:rrb}, we get the second relation \eqref{E:rrbn}
 \end{proof}
 \begin{remark}
 Setting $q=1$ in \eqref{E:rrb} and \eqref{E:rrbn}, we obtain the following properties of $B_n(r)$ \cite{tem96}
 \begin{equation}
B_n(r+s)=\sum_{j=0}^n\binom{n}{j}\,r^{n-j}B_j(s),\; \text{and }\;B_n(r)=\sum_{j=0}^n\binom{n}{j}\:r^{n-j}\:B_j.
\end{equation}
 \end{remark}

\end{document}